\newtheorem{theorem}{Theorem}
\newtheorem{lemma}[theorem]{Lemma}
\theoremstyle{remark}
\newtheorem*{remark*}{Remark}
\title{The $p$-norm of circulant matrices \\via Fourier analysis}
\author{\IEEEauthorblockN{K.~R.~Sahasranand$^\ast$}}
\begin{document}
\maketitle

{\renewcommand{\thefootnote}{}\footnotetext{
\noindent$^\ast${Department of Electrical Communication Engineering, Indian Institute of Science.  Email: sahasranand@iisc.ac.in }}}

\renewcommand{\thefootnote}{\arabic{footnote}}
\setcounter{footnote}{0}
\vspace{-0.4cm}
\begin{abstract}
A recent paper~\cite{bouthat2021p} computed the induced $p$-norm of a special class of circulant matrices $A(n,a,b) \in \mathbb{R}^{n \times n}$, with the diagonal entries equal to $a \in \mathbb{R}$ and the off-diagonal entries equal to $b \ge 0$. We provide shorter proofs for all the results therein using Fourier analysis. The key observation is that a circulant matrix is diagonalized by a DFT matrix. We obtain an exact expression for $\|A\|_p, 1 \le p \le \infty$, where $A = A(n,a,b), a \ge 0$ and for $\|A\|_2$ where $A = A(n,-a,b), a \ge 0$; for the other $p$-norms of $A(n,-a,b)$, $2 < p < \infty$, we provide upper and lower bounds.
\end{abstract}
\vspace{-0.2cm}
\section{Introduction}
Circulant matrices arise in many applications ranging from wireless communications~\cite{tse2005fundamentals} to cryptography~\cite{pub2001announcing} to solving differential equations~\cite{wilde1983differential} (see~\cite{bouthat2021p} and the references therein for the historical context and more recent theoretical studies on circulant matrices). A circulant matrix is of the form
\[
A = \left[
  \begin{array}{ccccc}
    a_1 & a_2   & a_3 & \ldots & a_n    \\
    a_n & a_1   & a_2 & \ldots & a_{n-1}    \\
    a_{n-1} & a_n & a_1 & \ldots & a_{n-2}\\
	\vdots & \vdots & \vdots & \ddots & \vdots \\
	a_2 & a_3 & a_4 & \ldots & a_1
  \end{array}
\right],
\]
where $a_j \in \mathbb{R}, 1 \le j \le n$. For a matrix $A \in \mathbb{R}^{n \times n}$, we define the operator norm
\[
\|A\|_p = \sup_{x \neq 0} \frac{\|Ax\|_p}{\|x\|_p},
\]
for $1 \le p \le \infty$, where, for a vector $y = (y_1,\ldots,y_n) \in \mathbb{R}^n$,
\[
\|y\|_\infty = \max\left\{|y_1|,\ldots,|y_n|\right\},
\]
and for $1 \le p < \infty$,
\[
\|y\|_p = \left(|y_1|^p + \cdots + |y_n|^p\right)^{1/p}.
\]
It is well-known~\cite{rmgraycirculant} that the eigen decomposition of a circulant matrix $A$ is of the form $F^* \Lambda F$ where $F$ denotes the Discrete Fourier Transform (DFT) matrix (proof provided in the appendix for completeness); $F_{jk} = \frac{1}{\sqrt{n}}\cdot \omega_n^{-jk}, 0 \le j,k \le n-1$, where $\omega_n = e^{\frac{2\pi i}{n}}$ and for a matrix $B$, $B_{jk}$ denotes its $(j,k)$-th entry and $B^*$ denotes its adjoint. The eigenvalues, namely the diagonal entries of $\Lambda$, are given by
\begin{equation}
\lambda_k := \Lambda_{kk} = \sum_{j=0}^{n-1} a_{j+1} \omega_n^{jk}, ~~0 \le k \le n-1.
\label{eqn:eval}
\end{equation}
We use this property of circulant matrices to study the $p$-norm of a special class of circulant matrices, $A(n,a,b) \in \mathbb{R}^{n \times n}, a,b \in \mathbb{R}$, where
\[
A(n,a,b) := \left[
  \begin{array}{ccccc}
    a & b   & b & \ldots & b    \\
    b & a   & b & \ldots & b    \\
    b & b   & a & \ldots & b \\
	\vdots & \vdots & \vdots & \ddots & \vdots \\
	b & b & b & \ldots & a
  \end{array}
\right].
\]
The $1$-norm and the infinity norm of $A(n,a,b)$ are easily calculated to be $|a| + (n-1)|b|$ by inspection. As observed in~\cite{bouthat2021p}, it suffices to consider the following two cases: $A(n,a,b)$ and $A(n,-a,b)$ where $a,b \ge 0$. We obtain an exact expression for $\|A\|_p, 1 < p < \infty$, where $A = A(n,a,b)$ and for $\|A\|_2$ where $A = A(n,-a,b)$. For $A = A(n,-a,b)$, we provide upper and lower bounds for $\|A\|_p, 2 < p < \infty$.
\section{Results and Proofs}
For a diagonal matrix, all the induced $p$-norms are equal to the maximum of the absolute value of the entries~\cite{horn2012matrix}. We calculate this value for $\Lambda$.
\begin{lemma}
For $a,b \ge 0$, for $1 \le p \le \infty$,
\begin{enumerate}
\item[i.] for $A = A(n,a,b)$ and $A = F^*\Lambda F$, we have
\begin{align*}
\|\Lambda\|_p = a + (n-1)b,
\end{align*}
\item[ii.] for $A = A(n,-a,b)$ and $A = F^*\Lambda F$, we have
\[
\|\Lambda\|_p = \begin{cases}
-a + (n-1)b &\text{ if } 2a \le (n-2)b\\
a + b &\text{ otherwise.}
\end{cases}
\]
\end{enumerate}  
\label{l:lamnorm}
\end{lemma}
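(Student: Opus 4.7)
\medskip

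\noindent\textbf{Proof proposal.} Since $\Lambda$ is diagonal, the sentence preceding the lemma reduces the problem to computing $\max_{0 \le k \le n-1} |\lambda_k|$; this quantity no longer depends on $p$, which is the reason the lemma asserts the same value for every $p \in [1,\infty]$. So the plan is just: (a) use the eigenvalue formula (\ref{eqn:eval}) to write down each $\lambda_k$ in closed form, and (b) take the maximum modulus.

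For step (a), I would specialize (\ref{eqn:eval}) to $a_1 = \pm a$ and $a_2 = a_3 = \cdots = a_n = b$, so that
\[
\lambda_k \;=\; \pm a \;+\; b\sum_{j=1}^{n-1}\omega_n^{jk}.
\]
The tail sum is evaluated using the standard identity $\sum_{j=0}^{n-1}\omega_n^{jk}=n\delta_{k,0}$, which gives $\sum_{j=1}^{n-1}\omega_n^{jk}=n-1$ when $k=0$ and $-1$ otherwise. Hence, in case (i),
\[
\lambda_0 = a + (n-1)b, \qquad \lambda_k = a - b \ \ (k \ge 1),
\]
and in case (ii) the same expressions with $a$ replaced by $-a$.

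For step (b), case (i) is immediate: since $a,b\ge 0$, we have $a + (n-1)b \ge |a-b|$, so $\|\Lambda\|_p = a+(n-1)b$. Case (ii) requires a short case split on the sign of $-a+(n-1)b$ and then a comparison with $|-a-b|=a+b$. When $(n-1)b \ge a$, the comparison $(n-1)b-a \ge a+b$ is equivalent to $(n-2)b \ge 2a$, which matches the stated threshold. When $(n-1)b<a$, one checks that $a-(n-1)b < a+b$ automatically (as $b\ge 0$), so the maximum is $a+b$ in that regime, consistent with the ``otherwise'' branch.

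The only step that requires any care is the case split in (ii): one must verify that the two sub-cases of ``otherwise'' (namely $(n-1)b < a$, and $(n-1)b \ge a$ but $(n-2)b < 2a$) both produce $a+b$ as the maximum, and that the two boundaries agree. Once this bookkeeping is done, the lemma follows directly from the diagonal formula for $\|\Lambda\|_p$.
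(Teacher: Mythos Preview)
Your proposal is correct and follows essentially the same approach as the paper: specialize the eigenvalue formula~(\ref{eqn:eval}), use the geometric-sum identity to obtain $\lambda_0=\pm a+(n-1)b$ and $|\lambda_k|=|\pm a-b|$ for $k\ge 1$, and then take the maximum modulus. If anything, your case analysis for part~(ii) is spelled out in more detail than the paper's, which simply states that the result follows by computing $\max_k|\lambda_k|$ in each case.
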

\begin{proof}
For $A = A(n,a,b), a,b \in \mathbb{R}$ and $A = F^*\Lambda F$, by~\eqref{eqn:eval}, the diagonal entries of $\Lambda$ are given by
\begin{align*}
\lambda_k &= a\omega_n^{kk} + \sum_{\underset{j \neq k}{j=0}}^{n-1} b\omega_n^{jk}\\
&= b \sum_{j=0}^{n-1}\omega_n^{jk} + (a-b)\omega_n^{kk},
\end{align*}
for $0 \le k \le n-1$. Using the well-known identity (see, for example,~\cite{stein2011fourier}),
\[
\sum_{j=0}^{n-1} \omega_n^{jk} = \begin{cases}
n &\text{ if } k = 0 ~(\hspace{-0.35cm}\mod n)\\
0 &\text{ otherwise,}
\end{cases}
\]
we have
\begin{align*}
\lambda_0 &= bn + (a-b) = a + (n-1)b,
\end{align*}
and for $0 < k \le n-1$,
\[
|\lambda_k| = |a-b|.
\]
The result follows by calculating $\|\Lambda\|_p = \underset{0\le k \le n-1}{\max} |\lambda_k|$ for $1 \le p \le \infty$, for $A(n,a,b)$ and $A(n,-a,b)$.
\end{proof}
We use Lemma~\ref{l:lamnorm} to derive an exact expression for $\|A\|_2$ for $A(n,a, b)$ as well as $A(n,-a,b)$, for $a,b \ge 0$.
\begin{theorem}
For $a,b \ge 0$, 
\begin{enumerate}
\item[i.] for $A = A(n,a,b)$, we have
\begin{align*}
\|A\|_2 = a + (n-1)b.
\end{align*}
\item[ii.] for $A = A(n,-a,b)$, we have
\[
\|A\|_2 = \begin{cases}
-a + (n-1)b &\text{ if } 2a \le (n-2)b\\
a + b &\text{ otherwise.}
\end{cases}
\]
\end{enumerate}  
\label{thm:2norm}
\end{theorem}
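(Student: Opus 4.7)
The plan is to reduce the computation of $\|A\|_2$ to the already-computed quantity $\|\Lambda\|_2$ from Lemma~\ref{l:lamnorm}, by exploiting the fact that the DFT matrix $F$ is unitary. Concretely, I would argue that since $A = F^*\Lambda F$ and $F^*F = FF^* = I$, the map $x \mapsto Fx$ is an isometry in the Euclidean norm, so $\|Fx\|_2 = \|x\|_2$ for every $x$, and similarly for $F^*$.

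Using this, I would write, for any nonzero $x \in \mathbb{C}^n$,
\begin{align*}
\frac{\|Ax\|_2}{\|x\|_2} = \frac{\|F^*\Lambda F x\|_2}{\|x\|_2} = \frac{\|\Lambda F x\|_2}{\|F x\|_2},
\end{align*}
and then set $y = Fx$. Since $F$ is a bijection on $\mathbb{C}^n$, taking the supremum over nonzero $x$ is the same as taking the supremum over nonzero $y$, giving $\|A\|_2 = \|\Lambda\|_2$. Strictly speaking, the operator norm $\|A\|_2$ in the statement is defined over real vectors, but because $A$ is real and symmetric, $\|A\|_2$ equals its largest singular value, which coincides with the complex operator norm computed via the diagonalization; this small remark is worth noting before the identification.

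Once $\|A\|_2 = \|\Lambda\|_2$ is established, both parts (i) and (ii) follow immediately by substituting the two cases of Lemma~\ref{l:lamnorm}: for $A(n,a,b)$ we get $\|A\|_2 = a+(n-1)b$, and for $A(n,-a,b)$ we get the two-case expression depending on whether $2a \le (n-2)b$ or not.

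The only real obstacle is the conceptual step of passing cleanly from the real-operator-norm definition in the introduction to the complex unitary diagonalization, but this is standard for real symmetric matrices; after that, the proof is essentially a one-line invocation of unitary invariance of the $\ell^2$ norm followed by Lemma~\ref{l:lamnorm}.
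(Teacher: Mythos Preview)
Your approach is correct and matches the paper's proof essentially line for line: the paper simply notes that $\|A\|_2 = \|\Lambda\|_2$ because $F$ is unitary and then invokes Lemma~\ref{l:lamnorm}. Your additional remark about the real-versus-complex operator norm is a nice clarification but not needed for the argument, since the spectral norm of a real symmetric matrix agrees in both settings.
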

\begin{proof}
The result follows by observing that $\|A\|_2 = \|\Lambda\|_2$ since $F$ is unitary, and using Lemma~\ref{l:lamnorm}.
\end{proof}
\begin{remark*}
In~\cite{bani2008norm}, similar techniques are employed to calculate the (unitarily invariant) Schatten $p$-norms of block circulant matrices. 
\end{remark*}
As observed in~\cite{bouthat2021p}, $A = A(n,a,b), a \in \mathbb{R}, b \ge 0$ is self-adjoint and hence $\|A\|_p = \|A\|_q$, for $p$ and $q$ satisfying $\frac{1}{p} + \frac{1}{q} = 1$ (see~\cite{horn2012matrix}). Thus, it suffices to focus on either $p \in (1,2]$ or $p \in [2,\infty)$. First, we compute the $p$-norm of $A = A(n,a,b)$ with $a,b \ge 0$ for $p \ge 2$.
\begin{theorem}
For $A = A(n,a,b),a,b \ge 0$, for $p \ge 2$,
\[
\|A\|_p = a + (n-1)b.
\]
\end{theorem}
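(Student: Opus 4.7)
The plan is to sandwich $\|A\|_p$ between matching lower and upper bounds, both equal to $a+(n-1)b$, using a test-vector argument on one side and Riesz--Thorin interpolation between the two already-known endpoints $p=2$ and $p=\infty$ on the other.

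For the lower bound, every row of $A(n,a,b)$ sums to $a+(n-1)b$, so the all-ones vector $\mathbf{1} = (1,\dots,1) \in \mathbb{R}^n$ satisfies $A\mathbf{1} = (a+(n-1)b)\mathbf{1}$. Taking it as a test vector yields
\[
\|A\|_p \ge \frac{\|A\mathbf{1}\|_p}{\|\mathbf{1}\|_p} = a+(n-1)b.
\]

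For the upper bound, I would invoke the Riesz--Thorin interpolation theorem with endpoints $p_0 = 2$ and $p_1 = \infty$. Theorem~\ref{thm:2norm}(i) supplies $\|A\|_2 = a+(n-1)b$, and reading off the maximum absolute row sum gives $\|A\|_\infty = a+(n-1)b$. Writing $1/p = (1-\theta)/2$ with $\theta \in [0,1]$, so that $p$ sweeps out $[2,\infty]$, interpolation delivers
\[
\|A\|_p \le \|A\|_2^{1-\theta}\,\|A\|_\infty^{\theta} = a+(n-1)b,
\]
which matches the lower bound.

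I do not anticipate a serious obstacle. It is worth noting why a naive triangle-inequality argument based on the decomposition $A = (a-b)I + bJ$ (with $J$ the all-ones matrix) does \emph{not} suffice: when $a < b$ it only yields the weaker bound $-a + (n+1)b$, because the Hölder estimate $|\mathbf{1}^\top x| \le n^{1-1/p}\|x\|_p$ cannot simultaneously use the sign cancellation between $(a-b)I$ and $bJ$. The Riesz--Thorin argument circumvents this by exploiting that the two endpoint values $\|A\|_2$ and $\|A\|_\infty$ happen to coincide. One minor technicality is that Riesz--Thorin is classically stated for complex-linear operators, but the real $p$-norm of a real matrix is bounded above by its complex $p$-norm and the lower bound is attained by the real vector $\mathbf{1}$, so the conclusion transfers back to the real matrix $A$ with no loss.
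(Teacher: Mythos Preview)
Your proposal is correct and follows essentially the same route as the paper: the all-ones vector for the lower bound, and Riesz--Thorin interpolation between the coinciding endpoint values $\|A\|_2 = \|A\|_\infty = a+(n-1)b$ for the upper bound. The additional remarks on the failure of the $(a-b)I + bJ$ decomposition and on the real-versus-complex issue in Riesz--Thorin are apt but go beyond what the paper records.
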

\begin{proof}
Using the vector $x = [1,1,\ldots,1]^T$, where $[\cdot]^T$ denotes the transpose, we have 
\[
\|A\|_p \ge a + (n-1)b. 
\]
Next, observe that 
\[
\|A\|_\infty = a + (n-1)b = \|A\|_2, 
\]
where the last identity is by Theorem~\ref{thm:2norm}. By the Riesz-Thorin interpolation theorem~\cite[Theorem $2.1$]{stein2011functional} we have, for every $0 < \theta < 1$,
\[
\|A\|_{p_\theta} \le \|A\|_q^{1-\theta}\|A\|_r^{\theta},
\]
where $p_\theta,q,$ and $r$ satisfy
\begin{equation}
\frac{1}{p_\theta} = \frac{1-\theta}{q} + \frac{\theta}{r}.
\label{eqn:rt}
\end{equation}
Setting $p_\theta = p, q = 2$, and $r = \infty$ in~\eqref{eqn:rt} yields 
\[
\|A\|_p \le \|A\|_\infty = a + (n-1)b.
\]
\end{proof}
\begin{remark*}
Using similar arguments as above, one can calculate $\|A\|_p$ of a general circulant matrix $A$ with non-negative entries $a_1,\ldots,a_n$ to be $a_1 + \cdots + a_n$. 
\end{remark*}
Next, we estimate the $p$-norm of $A(n,-a,b)$ with $a,b \ge 0$ for $p \ge 2$.
\begin{theorem}
\label{thm:pnorm}
For $A = A(n,-a,b), a,b \ge 0$ and $A = F^* \Lambda F$, for $p \ge 2$, we have
\begin{align*}
- a + (n-1)b &\le \|A\|_p \le  n^{\frac{1}{2}-\frac{1}{p}} \cdot (-a + (n-1)b) &\text{ if } 2a \le (n-2)b,\\
a + b &\le \|A\|_p \le  n^{\frac{1}{2}-\frac{1}{p}} \cdot (a+b) &\text{ if } 2a \ge (n-2)b.
\end{align*}
\end{theorem}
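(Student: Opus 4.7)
My plan is to establish the lower and upper bounds separately, both by leveraging Theorem~\ref{thm:2norm} and Lemma~\ref{l:lamnorm}; the case split in the statement simply tracks which expression equals $\|A\|_2$.

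For the lower bound I would exhibit two explicit real test vectors. Plugging $x = [1,\ldots,1]^T$ into $A = A(n,-a,b)$ gives $Ax = (-a+(n-1)b)\,x$, so $\|Ax\|_p/\|x\|_p = |-a+(n-1)b|$. Taking instead $x = (1,-1,0,\ldots,0)^T$, which has zero coordinate sum, a one-line row-by-row computation (using $(Ax)_i = -(a+b)x_i + b\sum_j x_j$) yields $Ax = -(a+b)\,x$, so $\|Ax\|_p/\|x\|_p = a+b$. Combining, $\|A\|_p \ge \max\{|-a+(n-1)b|,\,a+b\}$, and the two cases in the theorem correspond precisely to which of these terms attains the maximum, matching the claim via Lemma~\ref{l:lamnorm}. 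An equivalent viewpoint is that, since $A$ is self-adjoint, $\|A\|_p \ge \rho(A) = \|A\|_2$, and $\|A\|_2$ is supplied by Theorem~\ref{thm:2norm}.

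For the upper bound I would reuse the Riesz--Thorin strategy from Theorem~3, now interpolating between $p = 2$ and $p = \infty$. Setting $p_\theta = p$, $q = 2$, $r = \infty$ (so $\theta = 1-2/p$) in~\eqref{eqn:rt} gives
\[
\|A\|_p \le \|A\|_2^{2/p}\,\|A\|_\infty^{1-2/p}.
\]
To eliminate $\|A\|_\infty$ I would invoke the elementary comparison $\|y\|_\infty \le \|y\|_2 \le \sqrt{n}\,\|y\|_\infty$ on $\mathbb{R}^n$, which immediately yields $\|A\|_\infty \le \sqrt{n}\,\|A\|_2$. Substituting,
\[
\|A\|_p \le \|A\|_2^{2/p}\bigl(\sqrt{n}\,\|A\|_2\bigr)^{1-2/p} = n^{1/2 - 1/p}\,\|A\|_2,
\]
and Theorem~\ref{thm:2norm} rewrites $\|A\|_2$ as the stated case-dependent quantity.

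No single step is a genuine obstacle: the whole argument is a short chain built from Theorem~\ref{thm:2norm}, one Riesz--Thorin application, and the standard $\ell^2$--$\ell^\infty$ comparison on $\mathbb{R}^n$. The real qualitative point worth flagging is that the two bounds differ by the multiplicative gap $n^{1/2 - 1/p}$, so, unlike Theorem~3, this method does not pin down $\|A\|_p$ exactly; closing the gap would appear to require a more delicate analysis of how the complex diagonalization $A = F^*\Lambda F$ interacts with the $\ell^p$ norm, which is no longer unitarily invariant.
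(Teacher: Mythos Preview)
Your proposal is correct. The lower-bound half matches the paper exactly: the same two test vectors $x=(1,\ldots,1)^T$ and $x=(1,-1,0,\ldots,0)^T$ (up to a harmless sign) are used, and your spectral-radius remark is the same observation the paper makes in the Remark following Theorem~6.

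Your upper-bound route, however, differs from the paper's. The paper argues directly at the level of vectors: for $p\ge 2$ one has $\|Ax\|_p \le \|Ax\|_2 \le \|A\|_2\,\|x\|_2 \le n^{1/2-1/p}\,\|A\|_2\,\|x\|_p$, using only the elementary comparisons $\|y\|_p\le\|y\|_2$ and $\|x\|_2\le n^{1/2-1/p}\|x\|_p$ (and the Fourier diagonalization to identify $\|A\|_2$ with $\|\Lambda\|_2$). You instead pass through Riesz--Thorin to obtain $\|A\|_p \le \|A\|_2^{2/p}\|A\|_\infty^{1-2/p}$ and then relax via $\|A\|_\infty \le \sqrt{n}\,\|A\|_2$. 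Both reach the identical bound $n^{1/2-1/p}\|A\|_2$; the paper's path is slightly more elementary and self-contained for this theorem, while yours has the virtue that its intermediate inequality is strictly sharper --- indeed it is precisely the content of the paper's subsequent Theorem~5 --- so your argument effectively proves Theorems~\ref{thm:pnorm} and~5 in one stroke before discarding the extra precision.
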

\begin{proof}
For $p \ge 2$ and $x \neq 0$, we have
\begin{align*}
\|Ax\|_p &\le \|Ax\|_2\\
&\le \|F^*\|_2 \cdot \|\Lambda\|_2 \cdot \|\widehat{x}\|_2,
\end{align*}
where $\widehat{x}=Fx$ denotes the Fourier transform of $x$. By Plancherel's relation, we have $\|\widehat{x}\|_2 = \|x\|_2$ and $\|F^*\|_2 = 1$. Hence
\[
\|Ax\|_p \le \|\Lambda\|_2 \cdot \|x\|_2,
\]
whereby
\[
\|A\|_p \le n^{\frac{1}{2}-\frac{1}{p}} \cdot \|\Lambda\|_2,
\]
where we have used the inequality $\|x\|_r \le \|x\|_p \cdot n^{\frac{1}{r}-\frac{1}{p}}$ for $2=r \le p$. The upper bounds in the theorem now follow from Lemma~\ref{l:lamnorm}. To obtain the lower bounds, we exhibit a vector $x \neq 0$ such that $\|Ax\|_p/\|x\|_p$ equals the quantity in the desired lower bound.
\begin{enumerate}
\item[i.] $2a \le (n-2)b$: for $x = [1,1,\ldots,1]^T$,
\[
\frac{\|Ax\|_p}{\|x\|_p} = |-a + (n-1)b| \ge -a + (n-1)b.
\]
\item[ii.] $2a \ge (n-2)b$: for $x = [-1,1,0,\ldots,0]^T$,
\[
\frac{\|Ax\|_p}{\|x\|_p} = \left(\frac{|a+b|^p + |-a-b|^p}{2}\right)^{1/p} = a+b.
\]
\end{enumerate}
\end{proof}
Finally, we provide an improved upper bound for the $p$-norm of $A(n,-a,b)$ for $p > 2$, using the Riesz-Thorin interpolation theorem.
\begin{theorem}
For $A = A(n,-a,b), a,b \ge 0$, for every $p > 2$, we have,
\begin{align*}
\|A\|_p \le \|A\|_2^{\frac{2}{p}}\|A\|_\infty^{1-\frac{2}{p}}.
\end{align*}
\end{theorem}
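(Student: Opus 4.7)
The plan is to apply the Riesz-Thorin interpolation theorem in essentially the same way as in the proof of the $p$-norm bound for $A(n,a,b)$, but now using the interpolation endpoints $q=2$ and $r=\infty$ together with a suitable choice of interpolation parameter $\theta$. Unlike the earlier result, we will not attempt to simplify $\|A\|_\infty$ to a closed form, since for $A(n,-a,b)$ this value is simply $a+(n-1)b$ by inspection of the row sums, but the interest of the statement is to express the bound in terms of the two endpoint norms themselves.

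First I would recall the Riesz-Thorin statement already used above: for $0 < \theta < 1$,
\[
\|A\|_{p_\theta} \le \|A\|_q^{1-\theta}\|A\|_r^{\theta},
\qquad \frac{1}{p_\theta} = \frac{1-\theta}{q} + \frac{\theta}{r}.
\]
Next I would set $q=2$ and $r=\infty$, and choose $\theta = 1 - \frac{2}{p}$, which lies in $(0,1)$ precisely because $p > 2$. Plugging into the interpolation identity, the right-hand side becomes $\frac{1-\theta}{2} = \frac{1}{p}$, so $p_\theta = p$ as required.

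Substituting these values into the Riesz-Thorin inequality then gives
\[
\|A\|_p \le \|A\|_2^{1-\theta}\|A\|_\infty^{\theta} = \|A\|_2^{2/p}\|A\|_\infty^{1-2/p},
\]
which is the claimed bound. There is no real obstacle here; the only thing to verify is that $A$ is a well-defined linear operator on $\mathbb{R}^n$ so that the Riesz-Thorin theorem applies in the same form as in the earlier use. One could also optionally remark that the resulting bound is strictly sharper than the one in Theorem~\ref{thm:pnorm} whenever $\|A\|_\infty < n^{1/2}\|A\|_2$, which ties the two results together, but this is not needed to complete the proof.
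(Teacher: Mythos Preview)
Your proof is correct and matches the paper's own argument essentially line for line: set $q=2$, $r=\infty$, choose $\theta=1-2/p$, and read off the Riesz--Thorin bound. The only addition you make is the parenthetical comparison with the earlier upper bound, which is optional commentary.
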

\begin{proof}
Setting $q = 2$ and $r = \infty$ in~\eqref{eqn:rt} yields
\[
\frac{1}{p_\theta} = \frac{1-\theta}{2}.
\]
We choose $p_\theta = p$ to get
\[
\|A\|_{p} \le \|A\|_{2}^{1-\theta} \|A\|_\infty^{\theta}.
\] 
Since $\theta = 1-2/p$, we have
\[
\|A\|_{p} \le \|A\|_2^{\frac{2}{p}}\|A\|_\infty^{1-\frac{2}{p}}.
\]
\end{proof}
In fact, using similar arguments we can show the following.
\begin{theorem}
For $A = A(n,-a,b), a,b, \ge 0$, for $p \ge 2$, $\|A\|_{p}$ is monotonically non-decreasing in $p$.
\end{theorem}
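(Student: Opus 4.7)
The plan is to combine Riesz--Thorin interpolation, exactly as in the preceding theorem, with the lower bounds on $\|A\|_p$ already produced in Theorem~\ref{thm:pnorm}. Concretely, I would fix any two exponents $2 \le p_1 \le p_2 \le \infty$ and aim to show the single inequality $\|A\|_{p_1} \le \|A\|_{p_2}$.

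First, I would apply the Riesz--Thorin theorem with $q = 2$ and $r = p_2$ in~\eqref{eqn:rt}. Choosing $p_\theta = p_1$ and solving for $\theta$ yields
\begin{equation*}
\theta = \frac{\tfrac{1}{2} - \tfrac{1}{p_1}}{\tfrac{1}{2} - \tfrac{1}{p_2}},
\end{equation*}
which lies in $[0,1]$ precisely because $2 \le p_1 \le p_2$. The interpolation bound therefore gives
\begin{equation*}
\|A\|_{p_1} \le \|A\|_2^{1-\theta}\,\|A\|_{p_2}^{\theta}.
\end{equation*}

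Second, I would use the key observation that the lower bounds exhibited in the proof of Theorem~\ref{thm:pnorm} arise from test vectors ($[1,1,\ldots,1]^T$ in the case $2a \le (n-2)b$, and $[-1,1,0,\ldots,0]^T$ in the case $2a \ge (n-2)b$) whose Rayleigh-type quotients $\|Ax\|_p/\|x\|_p$ do not depend on $p$; moreover those quotients coincide in each regime with the exact value of $\|A\|_2$ computed in Theorem~\ref{thm:2norm}. Consequently $\|A\|_{p_2} \ge \|A\|_2$ in both regimes. Substituting this into the Riesz--Thorin bound collapses its right-hand side to $\|A\|_{p_2}^{1-\theta+\theta} = \|A\|_{p_2}$, which is the desired monotonicity.

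The only real ``obstacle'' is a bookkeeping one: verifying that the two lower bounds in Theorem~\ref{thm:pnorm} agree with $\|A\|_2$ under the matching hypothesis on $a$ and $b$. Once this is noted, the argument reduces to a one-line interpolation, and no further case analysis, differentiation in $p$, or direct manipulation of eigenvalues is necessary.
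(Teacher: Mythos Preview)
Your argument is correct, but it follows a different route from the paper.  The paper does not invoke the test-vector lower bounds of Theorem~\ref{thm:pnorm} at all.  Instead, for a given $p\ge 2$ and $\beta>0$ it sets $r=p+\beta$ and chooses $q$ to be the H\"older conjugate of $r$ (so $\tfrac{1}{q}+\tfrac{1}{r}=1$, hence $q<2\le p<r$); Riesz--Thorin between $q$ and $r$ then yields $\|A\|_p\le \|A\|_q^{1-\theta}\|A\|_r^{\theta}$, and self-adjointness gives $\|A\|_q=\|A\|_r$, collapsing the bound to $\|A\|_p\le\|A\|_{p+\beta}$.  Thus the paper's proof uses only self-adjointness and interpolation, and in fact recovers the inequality $\|A\|_p\ge\|A\|_2$ as a \emph{corollary} of monotonicity rather than as an input.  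Your version reverses this dependency: you feed in $\|A\|_{p_2}\ge\|A\|_2$ (obtained independently from the explicit test vectors) and interpolate between $2$ and $p_2$.  Both arguments are short and valid; the paper's has the advantage of working verbatim for any self-adjoint operator without appealing to the particular structure of $A(n,-a,b)$, while yours makes efficient reuse of the concrete lower bounds already on the page.
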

\begin{proof}
Fix $\beta > 0$. Setting $q = p-\alpha, \alpha > 0, r = p+\beta$, and $p_\theta = p$ in~\eqref{eqn:rt} yields
\[
\|A\|_{p} \le \|A\|_{p-\alpha}^{1-\theta}\|A\|_{p+\beta}^\theta.
\]
Choose $\alpha$ such that 
\[
\frac{1}{p-\alpha} + \frac{1}{p+\beta} = 1.
\]
Since $A$ is self-adjoint, we have $\|A\|_{p-\alpha} = \|A\|_{p+\beta}$, and hence
\[
\|A\|_p \le \|A\|_{p+\beta}.
\]
\end{proof}
\begin{remark*}
As a corollary, we get $\|A\|_p \ge \|A\|_2$ for all $p \ge 2$, which is the same as the lower bound in Theorem~\ref{thm:pnorm}.
\end{remark*}

\section{Summary}
Using the observation that a circulant matrix is diagonalized by a DFT matrix, we have computed the $p$-norm of a special class of circulant matrices $A(n,a,b) \in \mathbb{R}^{n \times n}$, with the diagonal entries equal to $a \in \mathbb{R}$ and the off-diagonal entries equal to $b \ge 0$. The $1$-norm and the infinity norm of $A(n,a,b)$ are easily calculated to be $|a| + (n-1)|b|$ by inspection. For $A = A(n,a,b)$ with $a,b \ge 0$, we show that for all $1 \le p \le \infty$,
\[
\|A\|_p = a + (n-1)b.
\]
For $A = A(n,-a,b)$ with $a,b \ge 0$, we obtain an exact expression for $\|A\|_2$. Since $A$ is self-adjoint, $\|A\|_q = \|A\|_p$ for conjugate pairs, $p$ and $q$. This, along with the Riesz-Thorin interpolation theorem, implies that for $2 \le p \le \infty$, $\|A\|_p$ is monotonically non-decreasing in $p$. Further, we show that for $2 \le p \le \infty$,
\[
\|A\|_2 \le \|A\|_p \le \|A\|_2^{\frac{2}{p}}\|A\|_\infty^{1-\frac{2}{p}}.
\]
An exact expression for $\|A\|_p, 2 < p < \infty$ for $A = A(n,-a,b),a,b\ge 0$, remains elusive.
\section*{Acknowledgments}
The author thanks Prof. Manjunath Krishnapur and Prof. Apoorva Khare for useful comments.

\begin{appendix}
We prove that for any circulant matrix $A$, the eigen decomposition is of the form $A = F^* \Lambda F$, where
\[
F^* = \frac{1}{\sqrt{n}} \left[
  \begin{array}{ccccc}
    1 & 1   & 1 & \ldots & 1    \\
    1 & \omega   & \omega^2 & \ldots & \omega^{n-1}    \\
    1 & \omega^2 & \omega^4 & \ldots & \omega^{2(n-1)}\\
	\vdots & \vdots & \vdots & \ddots & \vdots \\
	1 & \omega^{n-1} & \omega^{2(n-1)} & \ldots & \omega^{(n-1)(n-1)}
  \end{array}
\right],
\]
and $\omega = e^{\frac{2\pi i}{n}}$ (we drop the subscript $n$ of $\omega_n$ for brevity). Define a permutation matrix
\[
P := \left[
  \begin{array}{ccccc}
    0 & 1   & 0 & \ldots & 0    \\
    0 & 0   & 1 & \ldots & 0    \\
    0 & 0   & 0 & \ldots & 0	\\
	\vdots & \vdots & \vdots & \ddots & \vdots \\
	1 & 0 & 0 & \ldots & 0
  \end{array}
\right].
\]
\textbf{\emph{Claim.}} $P = F^* \Omega F$ where $\Omega$ is a diagonal matrix with entries $\Omega_{kk} = \omega^k, 0 \le k \le n-1$.
\begin{proof}
For $0 \le j,k \le n-1$, observe that
\begin{align*}
j\text{-th row of } F^* &= \frac{1}{\sqrt{n}}\left[\omega^{0j}~~\omega^{1j}~~\cdots~~\omega^{(n-1)j}\right],\\
j\text{-th row of } F^*\Omega &= \frac{1}{\sqrt{n}}\left[\omega^{0(j+1)}~~\omega^{1(j+1)}~~\cdots~~\omega^{(n-1)(j+1)}\right],\\
k\text{-th column of } F &= \frac{1}{\sqrt{n}}\left[\omega^{-0k}~~\omega^{-1k}~~\cdots~~\omega^{-(n-1)k}\right]^T.
\end{align*}
Thus,
\begin{align*}
(j,k)\text{-th element of } F^*\Omega F &= \frac{1}{n} \sum_{r=0}^{n-1} \omega^{r(j+1)}\omega^{-rk} = \begin{cases}
1 \text{ if } j = k-1~(\hspace{-0.35cm}\mod n)\\
0 \text{ otherwise.}
\end{cases}
\end{align*}
\end{proof}
Therefore, a circulant matrix
\begin{align*}
A &= \left[
  \begin{array}{ccccc}
    a_1 & a_2   & a_3 & \ldots & a_n    \\
    a_n & a_1   & a_2 & \ldots & a_{n-1}    \\
     a_{n-1} & a_n & a_1 & \ldots & a_{n-2}\\
	\vdots & \vdots & \vdots & \ddots & \vdots \\
	a_2 & a_3 & a_4 & \ldots & a_1
  \end{array}
\right]\\
&= a_1 I + a_2 P + a_3 P^2 + \cdots + a_n P^{n-1}\\
&= F^* \left(a_1 I + a_2 \Omega + a_3 \Omega^2 + \cdots + a_n \Omega^{n-1}\right)F\\
&= F^* \Lambda F.
\end{align*}
Observe that $\lambda_k = \Lambda_{kk}, 0 \le k \le n-1$, satisfy~\eqref{eqn:eval}.\\
\end{appendix}

\footnotesize \emph{Acknowledgement.} The author learnt this proof from Ashok Krishnan, IISc, at a Digital Communication study group meeting in $2011$.
\vspace{-.1cm}
\bibliography{references}
\bibliographystyle{IEEEtran}
\end{document}